\documentclass[12pt]{article}

\linespread{1.6}

\begin{document}

\begin{titlepage}

 \vspace*{-2cm}

\vspace{.5cm}

\begin{centering}

\huge{On the tensor structure of BRST differential and it's
application}

\vspace{.5cm}

\large  {Jining Gao }\\

\vspace{.5cm}

Department of Mathematics, Shanghai Jiaotong University, Shanghai
,P. R. China

\vspace{.5cm}

\begin{abstract}
In this paper,we compute tensor structure of BRST differential and
use this tensor representation we give out $CL_{\infty}$ algebra
differential and  $GA_{\infty}$ differential which are
generalization of Chevalley-Eilenberg differential and Hochchild
differential respectively.

\end{abstract}

\end{centering}

\end{titlepage}

\pagebreak

\def\lh{\hbox to 15pt{\vbox{\vskip 6pt\hrule width 6.5pt height 1pt}
  \kern -4.0pt\vrule height 8pt width 1pt\hfil}}
\def\blob{\mbox{$\;\Box$}}
\def\qed{\hbox{${\vcenter{\vbox{\hrule height 0.4pt\hbox{\vrule width
0.4pt height 6pt \kern5pt\vrule width 0.4pt}\hrule height
0.4pt}}}$}}

\newtheorem{theorem}{Theorem}
\newtheorem{lemma}[theorem]{Lemma}
\newtheorem{definition}[theorem]{Definition}
\newtheorem{corollary}[theorem]{Corollary}
\newtheorem{proposition}[theorem]{Proposition}
\newcommand{\proof}{\bf Proof.\rm}

\section{Introduction}
In theoretical physics BRST differential\cite{HT} (or BRST symmetry)
is usually represented by ghost variables and when ghost variables
correspond to exterior differential forms in differential geometry,
BRST differential corresponds to an exterior differential in some
specific case. Although exterior differential forms is main language
in modern differential geometry, tensor representation still plays
an important role in geometry and physics as an another equivalent
language. The correspondence between ghost (or exterior differential
forms) representations and tensor representations is very
transparent when we consider a special case of BRST differential-
 Chevalley-Eilenberg differential as follows:

  Chevalley-Eilenberg differential or (Lie algebra
differential) was introduced and developed in the fifties and
sixties of the last century\cite{CE},\cite{AL}. Let's first recall
it's definition.

Assume that $L$ is a Lie algebra, $V$ is a representation space with
representation $\rho.$  Set $C^{n}(L,V)={\omega _{n}: L\times
L\cdots \times L\rightarrow V}$. The  Lie algebra differential
$S_{n} :C^{n}(L,V)\rightarrow C^{n+1}(L,V)$ is defined by its action
on the cochain:
$$
(S\omega_{n})(X_1,\cdots
X_{n+1})=\sum_{i=1}^{n+1}(-1)^{i+1}\rho(X_{i})(\omega(X_1,\cdots\hat{X}_{i},\cdots
X_{n+1})$$
\begin{eqnarray}
+\sum_{j,k=1}^{n+1}(-1)^{j+k}\omega([X_j,X_k],X_{1}\cdots\hat{X}_{j},\cdots
X_{k}\cdots X_{n+1})
\end{eqnarray}

Where $X_1,\cdots X_{n+1}\in L$ and the Lie algebra cohomology can
be induced by the complex $(C^{n},s_{n})$. Since complex
$C^{n}(L,V)$ consists of multi-linear maps which are represented by
various tensors,we call above description of Chevalley-Eilenberg
differential tensor representation. If $L$ is a finite dimensional
Lie algebra, the Chevalley-Eilenberg differential defined above can
have an equivalent but more neat description so called ghost
representation or BRST approach as follows: Let ${X_{i}}$ be a basis
of $L$, for every $X_{i}$, we introduce a ghost variable $\eta^{i}$
such that
$$\eta^{i}\eta^{j}=-\eta^{j}\eta^{i}$$ and

\begin{eqnarray}
\tilde{S}
=\eta^{i}\rho(X_{i})+\frac{1}{2}c_{ij}^{k}c^{j}c^{i}\frac{\partial}{\partial
c^k}
\end{eqnarray}
Although both definitions of Chevalley-Eilenberg differential looks
different, the nilpotency is only determined by following two basic
facts:

(a)$\rho$ is a representation of Lie algebra $L$ with representation
space $V$ i.e. for any $X_{i},X_{j}\in L$
$$\rho([X_{i},X_{j}])=[\rho(X_{i}),\rho(X_{j})]$$

(b)$L$ is a Lie algebra

To author's knowledge,BRST differential just has ghost
representation so far. We wonder if there exists tensor
representations just like in the case of CE differential,or
equivalently,if there exists similar conditions as (a) and (b) to
determine the nilpotency of BRST differential.
 The purpose of this
paper is to give a complete answer to above questions . First of
all, by a lengthy computation ,we obtain nilpotent equations of
expansion coefficients for general BRST differential, secondly , we
assign a multi-linear map to every  expansion coefficient and
nilpotent equations transform to equations in term of those
multi-linear maps. Those equations are obviously divided into two
groups as (a) and (b)but much more complicated. We will find
equations group (b) are exactly structure equations of commutator sh
lie structure which is called $CL_{\infty}$ algebra or in more
general,$GA_{\infty}$ algebra in this paper, and according equations
group (a), we introduce linear representations of $CL_{\infty}$
algebra and $GA_{\infty}$ algebra . With all above preliminary , we
can define so called $CL_{\infty}$ algebra differential and
$GA_{\infty}$ algebra differential. Just like Lie algebra
 cohomology is a power tool in Lie algebra representation,
 we may expect  $GA_{\infty}$ algebra
 cohomology has similar
 application in representation of $A_{\infty}$ algebra in future.

\section{Symmetry and skew-symmetry operators}
In this section, we will give some preliminary about symmetry and
skew-symmetry tensor which is usually encountered in the computation
of tensor derivative. First of all, it's necessary to introduce
skew-symmetry or symmetry operator. For any given tensor coefficient
$f_{a_{1}\cdots a_{n}}$ Let$$\hat{S}(f_{a_{1}\cdots a_{n}})=
\sum_{\sigma\in s_n}(-1)^{\sigma}e(\sigma)(f_{\sigma(a_{1})\cdots
\sigma(a_{n})})$$ and
$$\tilde{S}(f_{a_{1}\cdots a_{n}})=
\sum_{\sigma\in s_n}e(\sigma)(f_{\sigma(a_{1})\cdots
\sigma(a_{n})})$$ we have that
\begin{lemma}
$\hat{S}$ and
$\tilde{S}$ are skew symmetry and symmetry operators respectively.
\end{lemma}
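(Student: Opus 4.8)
The plan is to verify directly that for an \emph{arbitrary} permutation $\pi\in s_n$ the permuted tensor $e(\pi)\hat S(f_{a_1\cdots a_n})$ equals $(-1)^{\pi}\,\hat S(f_{a_1\cdots a_n})$, and likewise that $e(\pi)\tilde S(f_{a_1\cdots a_n})=\tilde S(f_{a_1\cdots a_n})$; these two identities are precisely the statements that $\hat S(f)$ is totally skew-symmetric and $\tilde S(f)$ is totally symmetric in its indices. Two elementary facts about the symmetric group do all the work. First, the index-permutation operator is a left action, $e(\pi)e(\sigma)=e(\pi\sigma)$, with $e(\sigma)(f_{a_1\cdots a_n})=f_{\sigma(a_1)\cdots\sigma(a_n)}$, so that applying $\pi$ after $\sigma$ merely relabels the summand by the composite $\pi\sigma$. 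Second, the signature is a homomorphism, $(-1)^{\pi\sigma}=(-1)^{\pi}(-1)^{\sigma}$, with $(-1)^{\pi^{-1}}=(-1)^{\pi}$.

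First I would apply $e(\pi)$ to the defining sum for $\hat S$ and push it through term by term:
\begin{equation}
e(\pi)\hat S(f_{a_1\cdots a_n})=\sum_{\sigma\in s_n}(-1)^{\sigma}\,e(\pi)e(\sigma)(f_{a_1\cdots a_n})=\sum_{\sigma\in s_n}(-1)^{\sigma}\,e(\pi\sigma)(f_{a_1\cdots a_n}).
\end{equation}
The decisive step is the change of summation variable $\sigma'=\pi\sigma$. Because left multiplication by the fixed element $\pi$ is a bijection of $s_n$ onto itself, $\sigma'$ runs over all of $s_n$ exactly once as $\sigma$ does; and from $\sigma=\pi^{-1}\sigma'$ together with the signature homomorphism we obtain $(-1)^{\sigma}=(-1)^{\pi^{-1}}(-1)^{\sigma'}=(-1)^{\pi}(-1)^{\sigma'}$. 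Substituting and pulling the constant $(-1)^{\pi}$ outside the sum returns $(-1)^{\pi}\sum_{\sigma'}(-1)^{\sigma'}e(\sigma')(f_{a_1\cdots a_n})=(-1)^{\pi}\,\hat S(f_{a_1\cdots a_n})$, which is the skew-symmetry claim.

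The computation for $\tilde S$ is identical except that the signature weights are absent, so the reindexing $\sigma'=\pi\sigma$ leaves the sum literally invariant and yields $e(\pi)\tilde S(f)=\tilde S(f)$, the total symmetry. Alternatively, one may reduce to the case in which $\pi$ is a single transposition: since transpositions generate $s_n$ and each has signature $-1$, verifying the sign flip (respectively, the invariance) under one transposition propagates to every element of $s_n$, which gives a slightly shorter write-up at the cost of an extra generation argument.

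I do not anticipate a genuine obstacle; the result is the standard bookkeeping identity for the (anti)symmetrizer on tensor indices. The only point demanding care is the compositional convention for the operator $e(\sigma)$: the whole argument hinges on $e$ being a bona fide left action of $s_n$, so I would first make explicit that $e(\pi)$ permutes the index slots according to $\pi$ and therefore satisfies $e(\pi)e(\sigma)=e(\pi\sigma)$. Were $e$ instead defined to act on the other side, one would simply use the substitution $\sigma'=\sigma\pi$ in place of $\sigma'=\pi\sigma$, with no change whatsoever to the conclusion.
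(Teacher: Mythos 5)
Your proof is correct and is essentially the paper's own argument: both rest on reindexing the sum over $s_n$ by a fixed permutation and using that the signature is a homomorphism with $(-1)^{\pi^{-1}}=(-1)^{\pi}$. The only cosmetic difference is that you permute the output via $e(\pi)$ and substitute $\sigma'=\pi\sigma$ (left multiplication), while the paper permutes the argument $f_{\tau(a_1)\cdots\tau(a_n)}$ and substitutes $\tau'=\sigma\tau$ (right multiplication) -- the two formulations are equivalent here and carry identical content.
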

\begin{proof}
Let's first prove that $\hat{s}$ is a skew symmetry operator
\begin{eqnarray}
\hat{S}(f_{\tau(a_{1})\cdots \tau(a_{n})}=\sum_{\sigma\in
s_n}(-1)^{\sigma}e(\sigma)(f_{\sigma(\tau(a_{1}))\cdots
\sigma(\tau(a_{n}))})\nonumber\\
=\sum_{\sigma\in
s_n}(-1)^{\sigma}e((\sigma\tau)\tau^{-1})(f_{(\sigma\tau)(a_{1})\cdots
(\sigma\tau)(a_{n})})\nonumber\\
=\sum_{\sigma\in
s_n}(-1)^{\sigma}e((\sigma\tau))e(\tau^{-1})(f_{(\sigma\tau)(a_{1})\cdots
(\sigma\tau)(a_{n})})\nonumber\\
=\sum_{\sigma\in
s_n}(-1)^{\sigma}(-1)^{\sigma\tau}(-1)^{\sigma\tau}e((\sigma\tau))e(\tau^{-1})(f_{(\sigma\tau)(a_{1})\cdots
(\sigma\tau)(a_{n})})\nonumber\\
=\sum_{\sigma\in s_n}(-1)^{\tau}
(-1)^{\sigma\tau}e((\sigma\tau))e(\tau^{-1})(f_{(\sigma\tau)(a_{1})\cdots
(\sigma\tau)(a_{n})})\nonumber\\
=(-1)^{\tau}e(\tau^{-1})[\sum_{\sigma\in s_n}
(-1)^{\sigma\tau}e((\sigma\tau))e(\tau^{-1})(f_{(\sigma\tau)(a_{1})\cdots
(\sigma\tau)(a_{n})})]
\end{eqnarray}
Let $\tau^{'}=\sigma\tau$, when $\sigma$ runs through all elements
of $s_n$, so does $\tau^{'}$ thus
$$
\hat{S}(f_{\tau(a_{1})\cdots
\tau(a_{n}})=(-1)^{\tau}e(\tau^{-1})[\sum_{\tau^{'}\in s_n}
(-1)^{\sigma\tau^{'}}e((\sigma\tau^{'}))e(\tau^{-1})(f_{(\sigma\tau^{'})(a_{1})\cdots
(\sigma\tau^{'})(a_{n}))}]$$
\begin{eqnarray}
 =(-1)^{\tau}e(\tau^{-1})\hat{S}(f_{a_{1}\cdots a_{n}})
\end{eqnarray}

Similarly,we can prove$\check{S}$ is a symmetry operator
\end{proof}
\section{Structure of BRST differential and it's tensor representation }

In this section, we will give out structure equations of BRST
differential and express it via muti-linear maps. Assume that $V$ is
a graded space and $V_{i}(i \in I)$ is a basis of $V$., the
underlying field is $F$. Suppose there are series of skew
multi-linear maps:
\begin{eqnarray}
l_n : \bigotimes^{n }V\rightarrow V
\end{eqnarray}
i.e
\begin{eqnarray}
l_{n}(v_{\sigma_{(1)}}\otimes v_{\sigma_{(2)}}\cdots \otimes
v_{\sigma_{(n)}}) =(-1)^{\sigma}e(\sigma)l_{n}(v_{(1)},
v_{(2)}\cdots v_{(n)})
\end{eqnarray}
Assume that  \begin{eqnarray} l_{n}(v_{i_{1}}, v_{i_{2}} \cdots
v_{i_{n}})=\sum C_{i_{1}\cdots i_{n}}^{j}v_{j}\label{a}
\end{eqnarray}
 where $C_{i_{1}\cdots
i_{n}}^{j}\in F$. Consider dual space $V^*=span(\eta^i,i\in I)$ with
$deg \eta^{i}=deg v_{i}+1$. Let $R=F[\eta^i,i\in I]$, there is a
natural ring structure on $R$ with following commutative law:
\begin{eqnarray}
\eta^a\eta^b=-(-1)^{(deg a)(deg b)}\eta^b\eta^a \end{eqnarray} From
now on,we will study an odd derivative $S$ determined by the
following structure equations.
\begin{eqnarray}
S\eta^j=-\sum_{k=1}^{\infty}\frac{1}{k!}C_{i_{1}\cdots
i_{n}}^{j}\eta^{i_1}\cdots\eta^{i_k}
\end{eqnarray}
Where the skew symmetry coefficients $C_{i_{1}\cdots i_{n}}^{j}$ are
given by \ref{a}. Extend operation of $S$ on above generators to all
elements of $R$ by the following Lebnitz rlue of odd derivative.
\begin{eqnarray}
S(\eta^j\eta^k)=S(\eta^j)\eta^{k}-(-1)^{deg\eta^j}\eta^{j} S(\eta^k)
\end{eqnarray}
(or in short set $deg\eta^j=\eta^j$) \newline We will determine when
$S$ becomes a differential. Before giving out the condition, we need
following lemma.
\begin{lemma}

\begin{eqnarray}
S(\eta^{i_1}\cdots\eta^{i_k})=\sum_{l=1}^{k}(-1)^{l-1}(-1)^{\eta^{i_1}+\cdots+\eta^{i_{l-1}}}
\eta^{i_1}\cdots S(\eta^{i_l})\cdots \eta^{i_k}
\end{eqnarray}
\end{lemma}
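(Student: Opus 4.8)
The plan is to prove the generalized Leibniz rule
\begin{eqnarray}
S(\eta^{i_1}\cdots\eta^{i_k})=\sum_{l=1}^{k}(-1)^{l-1}(-1)^{\eta^{i_1}+\cdots+\eta^{i_{l-1}}}
\eta^{i_1}\cdots S(\eta^{i_l})\cdots \eta^{i_k}\nonumber
\end{eqnarray}
by induction on the number $k$ of factors. The base case $k=1$ is trivial, and the case $k=2$ is precisely the defining two-factor Leibniz rule for the odd derivative $S$ that was postulated just before the statement. I would phrase the inductive step as a bookkeeping computation that reduces a product of $k$ factors to a product of two blocks, the single leading factor $\eta^{i_1}$ and the remaining block $\eta^{i_2}\cdots\eta^{i_k}$, so that the two-factor rule can be applied first and the inductive hypothesis can then be applied to the shorter block.

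Concretely, first I would write $\eta^{i_1}\cdots\eta^{i_k}=\eta^{i_1}\cdot(\eta^{i_2}\cdots\eta^{i_k})$ and apply the two-factor odd Leibniz rule, treating the block $\eta^{i_2}\cdots\eta^{i_k}$ as a single homogeneous element whose degree is the sum $\eta^{i_2}+\cdots+\eta^{i_k}$. This splits $S(\eta^{i_1}\cdots\eta^{i_k})$ into a term with $S(\eta^{i_1})$ acting on the front (which is already the $l=1$ summand) plus a sign $-(-1)^{\eta^{i_1}}$ times $\eta^{i_1}S(\eta^{i_2}\cdots\eta^{i_k})$. Next I would apply the inductive hypothesis to $S(\eta^{i_2}\cdots\eta^{i_k})$, which expands it as a sum over $l=2,\dots,k$ of terms where $S$ lands on $\eta^{i_l}$, each carrying the sign $(-1)^{l-2}(-1)^{\eta^{i_2}+\cdots+\eta^{i_{l-1}}}$.

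The heart of the argument is then the sign reconciliation: I must check that reindexing the inner sum and absorbing the leading factors $-(-1)^{\eta^{i_1}}$ and $\eta^{i_1}$ produces exactly the claimed sign $(-1)^{l-1}(-1)^{\eta^{i_1}+\cdots+\eta^{i_{l-1}}}$ for each $l\ge 2$. The factor $(-1)^{\eta^{i_1}}$ supplies the missing $\eta^{i_1}$ inside the degree exponent, promoting $(-1)^{\eta^{i_2}+\cdots+\eta^{i_{l-1}}}$ to $(-1)^{\eta^{i_1}+\cdots+\eta^{i_{l-1}}}$, while the overall minus sign combines with $(-1)^{l-2}$ from the hypothesis to yield $(-1)^{l-1}$. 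This matching of signs is the main (and essentially only) obstacle, since everything else is a formal rearrangement; care is required because the degree weighting is genuinely graded rather than a plain alternating sign, so one cannot simply count transpositions but must track the degrees $\eta^{i_j}$ faithfully. Once the two displays are seen to coincide term by term, the induction closes and the lemma follows.
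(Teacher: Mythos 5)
Your proof is correct and follows essentially the same strategy as the paper's: induction on $k$, with the two-factor odd Leibniz rule $S(\alpha\beta)=S(\alpha)\beta-(-1)^{\deg\alpha}\alpha S(\beta)$ supplying the inductive step. The only real difference is the direction of the peeling: the paper writes $\eta^{i_1}\cdots\eta^{i_k}=(\eta^{i_1}\cdots\eta^{i_{k-1}})\eta^{i_k}$ and applies the inductive hypothesis to the leading block, whereas you write $\eta^{i_1}\cdots\eta^{i_k}=\eta^{i_1}(\eta^{i_2}\cdots\eta^{i_k})$ and apply it to the trailing block. Your variant is marginally cleaner on the signs: the Leibniz prefactor you need is just $-(-1)^{\eta^{i_1}}$, the parity of a single generator, and as you observe it exactly supplies the missing $\eta^{i_1}$ in the degree exponent while the overall minus sign converts the inherited $(-1)^{l-2}$ into the claimed $(-1)^{l-1}$. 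The paper instead must evaluate $(-1)^{\deg(\eta^{i_1}\cdots\eta^{i_{k-1}})}$ for the entire $(k-1)$-fold block, which is where its somewhat opaque $(-1)^{k-2}$ bookkeeping enters. Both versions rest on the same implicit point, namely that the two-factor rule applies when one factor is an arbitrary homogeneous product rather than a single generator; this is legitimate because $S$ is extended to all of $R$ precisely as an odd derivation.
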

\begin{proof}We use induction method to derive it. It's obvious for
$k=2$. Suppose that the formula holds for $k-1,(k\geq 3)$ then
\begin{eqnarray}
& & S(\eta^{i_1}\cdots\eta^{i_k})=S[(\eta^{i_1}\cdots\eta^{i_{k-1}})\eta^{i_k}]\nonumber\\
& &
=S(\eta^{i_1}\cdots\eta^{i_{k-1}})\eta^{i_k}-(-1)^{deg(\eta^{i_1}
\cdots\eta^{i_{k-1}})}\eta^{i_1}\cdots\eta^{i_{k-1}}S\eta^{i_k}\nonumber\\\end{eqnarray}

$$
=[\sum_{l=1}^{k-1}(-1)^{l-1}(-1)^{\eta^{i_1}+\cdots+\eta^{i_{l-1}}}
\eta^{i_1}\cdots S(\eta^{i_l})\cdots \eta^{i_{k-1}}]\eta^{i_k}
$$

\begin{eqnarray}
-(-1)^{\eta^{i_1}+\cdots+\eta^{i_{k-1}}+k-2} \eta^{i_1}\cdots
S(\eta^{i_l})\cdots
\eta^{i_{k-1}}]S\eta^{i_k}\nonumber\\
\end{eqnarray}

$$
=\sum_{l=1}^{k-1}(-1)^{l-1}(-1)^{\eta^{i_1}+\cdots+\eta^{i_{l-1}}}
\eta^{i_1}\cdots S(\eta^{i_l})\cdots \eta^{i_k}$$
\begin{eqnarray}
+(-1)^{k-1}(-1)^{\eta^{i_1} +\cdots+\eta^{i_{k-1}}} \eta^{i_1}\cdots
S(\eta^{i_l})\cdots \eta^{i_{k-1}}]S\eta^{i_k}\nonumber\\
\end{eqnarray}
\end{proof}
${\bf Remark :}$ If we consider another type of derivative, i.e.
$$ S(\alpha\beta)=S(\alpha)\beta+(-1)^{deg\alpha}S(\beta)$$
the Lemma 2 should be modified to
\begin{eqnarray}
S(\eta^{i_1}\cdots\eta^{i_k})=\sum_{l=1}^{k}(-1)^{\eta^{i_1}+\cdots+\eta^{i_{l-1}}}
\eta^{i_1}\cdots S(\eta^{i_l})\cdots \eta^{i_k}
\end{eqnarray}
and $$ deg(\eta^{i}\eta^{j})=deg\eta^{i}+deg\eta^{j}$$. With above
preliminary, we have
\begin{theorem} $S^2\eta^{j}=0$ iff
\begin{eqnarray}
\sum_{\sigma\in S_{k+m-1},1\leq l\leq
k}(-1)^{A(\sigma)}e(\sigma)C_{a,i_{\sigma(1)}\cdots
i_{\sigma(l+m)}\cdots i_{\sigma(k+m-1)}}^{j}C_{i_{\sigma(l)}\cdots
i_{\sigma(l+m-1}}^{a}=0
\end{eqnarray}
Where $$A(\sigma)=\sigma+i_{\sigma(l)}+\cdots
+i_{\sigma(l+m-1}+(a,i_{\sigma(l)},\cdots i_{\sigma(l-1})$$
\end{theorem}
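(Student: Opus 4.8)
The plan is to compute $S^2\eta^j$ explicitly as an element of $R$ and then read off the condition for it to vanish. Since the monomials $\eta^{i_1}\cdots\eta^{i_n}$ taken in any fixed standard order form a basis of the free graded-commutative algebra $R$, the statement $S^2\eta^j=0$ is equivalent to the vanishing of the coefficient of each such monomial, and the theorem's identity will turn out to be precisely that coefficient. In this way both directions of the ``iff'' follow at once from a single expansion.

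First I would apply $S$ to the defining equation $S\eta^j=-\sum_k\frac{1}{k!}C^j_{i_1\cdots i_k}\eta^{i_1}\cdots\eta^{i_k}$, using linearity to bring $S$ inside the sum. Then I would invoke Lemma 2 to distribute $S$ across the product, producing $\sum_{l=1}^{k}(-1)^{l-1}(-1)^{\eta^{i_1}+\cdots+\eta^{i_{l-1}}}\eta^{i_1}\cdots S(\eta^{i_l})\cdots\eta^{i_k}$, and finally substitute the structure equation for each inner factor $S\eta^{i_l}=-\sum_m\frac{1}{m!}C^{i_l}_{p_1\cdots p_m}\eta^{p_1}\cdots\eta^{p_m}$, renaming the differentiated upper index $i_l$ to the contracted index $a$. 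At this stage $S^2\eta^j$ is a double sum over $k$ and $m$ whose generic term is a product $C^j_{\cdots a\cdots}\,C^a_{\cdots}$ times a monomial in $k+m-1$ of the $\eta$'s.

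The next step is to collect terms. I would use the graded-commutation law $\eta^a\eta^b=-(-1)^{(\deg a)(\deg b)}\eta^b\eta^a$ to reorder each monomial into the fixed standard order; the Koszul signs produced this way are exactly the $e(\sigma)$ and $(-1)^\sigma$ factors, while the contiguous block of $m$ inner indices may begin at any of the $(k+m-1)-m+1=k$ admissible positions, which supplies the sum over $1\le l\le k$ together with the permutation over $S_{k+m-1}$. The graded skew-symmetry of $C^j_{\cdots}$ combined with the antisymmetry of the $\eta$-monomials absorbs the $1/k!$ and $1/m!$ prefactors against the overcounting of ordered tuples, leaving the prefactor-free antisymmetrized expression. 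Setting the coefficient of each standard monomial of total $\eta$-degree $n=k+m-1$ to zero (summing, where needed, over all splittings $k+m-1=n$) then yields the stated identity.

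The main obstacle will be the sign bookkeeping, namely verifying that all the accumulated signs assemble into $A(\sigma)=\sigma+i_{\sigma(l)}+\cdots+i_{\sigma(l+m-1)}+(a,i_{\sigma(l)},\cdots,i_{\sigma(l-1)})$. Three independent sources must be tracked and combined: the alternating factor $(-1)^{l-1}$ and the degree-shift factor $(-1)^{\eta^{i_1}+\cdots+\eta^{i_{l-1}}}$ coming from Lemma 2; the Koszul signs generated when the inner $\eta$'s are commuted into place; and the sign incurred when the contracted output index $a$ is moved into the leading slot of the outer coefficient, which accounts for the Koszul term $(a,i_{\sigma(l)},\cdots,i_{\sigma(l-1)})$. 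Because these contributions interact, I would first verify the formula in the smallest nontrivial cases, $k=m=1$ and then $k=2,\,m=1$, to pin down the pattern before asserting the general $A(\sigma)$, and I would check throughout that the degree shift $\deg\eta^i=\deg v_i+1$ enters consistently.
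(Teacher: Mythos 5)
Your proposal follows essentially the same route as the paper: expand $S^2\eta^j$ via the structure equation and Lemma~2, substitute the inner $S\eta^{i_l}$, rename the contracted index to $a$, and then antisymmetrize (the paper does this by applying the operator $\hat S$ of Lemma~1 to the coefficient $f^j_{i_1\cdots i_{k+m-1}}$, which is the same as your reordering of monomials into standard form) so that the vanishing of each basis-monomial coefficient gives the stated identity. Your explicit remark that the ``iff'' follows from reading off coefficients of linearly independent monomials, and your plan to pin down $A(\sigma)$ on small cases, are sound and consistent with the paper's argument.
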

\begin{proof}
\begin{eqnarray}
S^2\eta^{j}=S(-\sum_{k=1}^{\infty}\frac{1}{k!}C_{i_{1}\cdots
i_{k}}^{j}\eta^{i_1}\cdots\eta^{i_k})\nonumber\\
=-\sum_{k=1}^{\infty}\frac{1}{k!}C_{i_{1}\cdots
i_{n}}^{j}S(\eta^{i_1}\cdots\eta^{i_k})
\end{eqnarray}
By the Lemma 2:
\begin{eqnarray}
& &
S(\eta^{i_1}\cdots\eta^{i_k})=\sum_{l=1}^{k}(-1)^{\eta^{i_1}+\cdots+\eta^{i_{l-1}}}
\eta^{i_1}\cdots S(\eta^{i_l})\cdots \eta^{i_k}\nonumber\\
& & =\sum_{l=1}^{k}(-1)^{l-1}(-1)^{\eta^{i_1}+\cdots+\eta^{i_{l-1}}}
\eta^{i_1}\cdots [-\sum_{m=1}^{\infty}\frac{1}{m!}C_{j_{1}\cdots
j_{m}}^{i_l}\eta^{j_1}\cdots\eta^{j_m} ]\cdots \eta^{i_k}\nonumber\\
& &
=\sum_{l=1}^{k}[-\sum_{m=1}^{\infty}\frac{1}{m!}(-1)^{l-1}(-1)^{\eta^{i_1}+\cdots+\eta^{i_{l-1}}}
C_{j_{1}\cdots j_{m}}^{i_l}\eta^{i_1}\cdots
\eta^{i_{l-1}}[\eta^{j_1}\cdots\eta^{j_m}]\eta^{i_{l+1}}\cdots
\eta^{i_k}]\nonumber\\
\end{eqnarray}
Thus
$$
S^2\eta^{j}=\sum_{m=1}^{\infty}\sum_{k=1}^{\infty}\frac{1}{m!k!}\sum_{l=1}^{k}(-1)^{l-1}(-1)^{\eta^{i_1}+\cdots+\eta^{i_{l-1}}}
C_{i_{1}\cdots i_{k}}^{j}C_{j_{1}\cdots j_{m}}^{i_l}\eta^{i_1}\cdots
\eta^{i_{l-1}}[\eta^{j_1}\cdots\eta^{j_m}]\eta^{i_{l+1}}\cdots
\eta^{i_k}
$$
In order to simplify $S^2\eta^{j}$ further,let's first introduce
some notations. We denote permutation parity from $(i_1,\cdots
i_{l-1},i_l \cdots i_n)$ to $(i_l,i_1,\cdots i_{l-1},i_{l+1} \cdots
i_n)$$ by $$(i_l,i_1,\cdots i_{l-1})$ and denote permutation sign by
$e(i_l,i_1,\cdots i_{l-1})$and
\begin{eqnarray}
S^2\eta^{j}=\sum_{m=1}^{\infty}\sum_{k=1}^{\infty}\frac{1}{m!k!}\sum_{l=1}^{k}A(i_l,i_1,\cdots
i_{l-1}) C_{i_{l},i_{1}\cdots \hat {i_{l}}\cdots
i_{k}}^{j}C_{j_{1}\cdots j_{m}}^{i_l}\eta^{i_1}\cdots
\eta^{i_{l-1}}[\eta^{j_1}\cdots\eta^{j_m}]\eta^{i_{l+1}}\cdots
\eta^{i_k}\nonumber\\
=\sum_{m=1}^{\infty}\sum_{k=1}^{\infty}\frac{1}{m!k!}\sum_{l=1}^{k}
A(a,i_1,\cdots i_{l-1})C_{a,i_{1},\cdots i_{l-1}, i_{l+m}\cdots
i_{k+m-1}}^{j}C_{i_{l}\cdots i_{l+m-1}}^{a}\eta^{i_1}\cdots
\eta^{i_{k+m-1}}
\end{eqnarray}
Where $$A(i_l,i_1,\cdots
i_{l-1})=(-1)^{l-1}(-1)^{\eta^{i_1}+\cdots+\eta^{i_{l-1}}}
(-1)^{(i_l,i_1,\cdots i_{l-1})}e(i_l,i_1,\cdots i_{l-1})$$ Let

$$f_{i_{1},\cdots
i_{k+m-1}}^{j}= A(a,i_1,\cdots i_{l-1})C_{a,i_{1},\cdots i_{l-1},
i_{l+m}\cdots i_{k+m-1}}^{j}C_{i_{l}\cdots i_{l+m-1}}^{a}$$ and
anti-symmetrize $$f_{i_{1},\cdots i_{k+m-1}}^{j}$$ by the operator
$\hat{S}$
\begin{eqnarray}
\hat{S}(f_{i_{1},\cdots i_{k+m-1}}^{j})=\sum_{\sigma \in
S_{k+m-1}}(-1)^{\sigma}e(\sigma)f_{i_{\sigma(1)},\cdots
i_{\sigma(k+m-1)}}^{j}
\end{eqnarray}
Thus we have

\begin{eqnarray}
S^{2}\eta^{j}=\sum_{m=1}^{\infty}\sum_{k=1}^{\infty}\frac{1}{m!k!(m+k-1)!}
\sum_{\sigma\in S_{k+m-1},1\leq l \leq
k}(-1)^{\sigma}e(\sigma)f_{i_{\sigma(1)},\cdots
i_{\sigma(k+m-1)}}^{j} \eta^{i_1}\cdots \eta^{i_{k+m-1}}
\end{eqnarray}
Then follows the theorem.
\end{proof}
Because the structure constants come from multi-linear maps $l_n$,
we have:
\begin{proposition}
$S^2 =0$ iff following equations hold:
\begin{eqnarray}
\frac{1}{m!k!} \sum_{\sigma\in S_{k+m-1},1\leq l\leq k}
l_{k}[v_{\sigma(1)}\cdots v_{\sigma(l-1)},l_{m}[v_{\sigma(l)}\cdots
v_{\sigma(l+m-1)}],v_{\sigma(l+m)}\cdots
v_{\sigma(k+m-1)}]=0\label{c}
\end{eqnarray}
Where $$B(\sigma,l-1)=(-1)^{\sigma+i_{\sigma(1)}+\cdots +
i_{\sigma(l-1)}}e(\sigma)$$
\end{proposition}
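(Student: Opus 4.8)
\medskip
\noindent\emph{Proof proposal.}
The plan is to deduce the proposition from the preceding theorem in two moves: promote nilpotency from the generators to all of $R$, and then rewrite the contracted structure constants as a composition of the maps $l_n$. First I would argue that it suffices to check $S^2$ on generators. Since $S$ is an odd derivation, a short computation with the Leibniz rule shows that the cross terms in $S^2(\alpha\beta)$ cancel, so $S^2$ is an \emph{even} derivation of $R$, i.e. $S^2(\alpha\beta)=S^2(\alpha)\beta+\alpha S^2(\beta)$. An even derivation is determined by its values on the ring generators $\eta^j$, hence $S^2=0$ on $R$ if and only if $S^2\eta^j=0$ for every $j\in I$. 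By the theorem this is equivalent to the vanishing, for each fixed $k,m$ and each $j$, of
\[
\sum_{\sigma\in S_{k+m-1},\,1\le l\le k}(-1)^{A(\sigma)}e(\sigma)\,C^{j}_{a,\,i_{\sigma(1)}\cdots i_{\sigma(l+m)}\cdots i_{\sigma(k+m-1)}}\,C^{a}_{i_{\sigma(l)}\cdots i_{\sigma(l+m-1)}}=0 .
\]

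Next I would identify the contraction $\sum_a C^{j}_{\cdots a\cdots}C^{a}_{\cdots}$ with the coefficient of $v_j$ in a composition of multi-linear maps. Expanding (\ref{a}) twice, first $l_m(v_{i_{\sigma(l)}},\dots,v_{i_{\sigma(l+m-1)}})=\sum_a C^{a}_{i_{\sigma(l)}\cdots i_{\sigma(l+m-1)}}\,v_a$ and then substituting $v_a$ into the $l$-th slot of $l_k$, gives
\[
l_k\big[v_{\sigma(1)},\dots,l_m[v_{\sigma(l)},\dots,v_{\sigma(l+m-1)}],\dots,v_{\sigma(k+m-1)}\big]=\sum_j\Big(\sum_a C^{j}_{\cdots a\cdots}\,C^{a}_{i_{\sigma(l)}\cdots i_{\sigma(l+m-1)}}\Big)v_j .
\]
Thus, fixing $k$ and $m$ and reading off the coefficient of a fixed $v_j$, the theorem's tensor identity becomes, term by term, the asserted relation \ref{c} among the maps $l_n$, each summand being weighted by the sign $B(\sigma,l-1)$.

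The step I expect to be the genuine obstacle is reconciling the two signs, namely showing that $(-1)^{A(\sigma)}e(\sigma)$ coincides with $B(\sigma,l-1)=(-1)^{\sigma+i_{\sigma(1)}+\cdots+i_{\sigma(l-1)}}e(\sigma)$ once the permutation has been transported from the structure constants onto the arguments of $l_n$. The difficulty is a grading mismatch: the coefficients $C$ are skew with respect to the $v$-degree through the skew-symmetry of $l_n$, whereas the monomials $\eta^{i_1}\cdots\eta^{i_{k+m-1}}$ carry the shifted degree $\deg\eta^i=\deg v_i+1$. In $A(\sigma)$ the reordering parity $(a,i_{\sigma(l)},\dots,i_{\sigma(l-1)})$ together with the Koszul factor $i_{\sigma(l)}+\cdots+i_{\sigma(l+m-1)}$ are exactly the corrections produced when the shifted generators are commuted past one another to bring $\eta^{j_1}\cdots\eta^{j_m}$ into position. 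I would verify, on a general $l$ and a representative $\sigma$, that the skew-symmetry of $l_n$ absorbs the intrinsic $(-1)^{\sigma}e(\sigma)$ while the remaining degree-shift contributions collapse to the single Koszul factor $(-1)^{i_{\sigma(1)}+\cdots+i_{\sigma(l-1)}}$. Stripping the common monomial $\eta^{i_1}\cdots\eta^{i_{k+m-1}}$ then yields the equivalence claimed in the proposition.
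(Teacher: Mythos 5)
Your proposal follows essentially the same route as the paper's proof: invoke Theorem 3 to reduce $S^2=0$ to the tensor identity on each generator $\eta^j$, then use the defining relation $l_n(v_{i_1},\dots,v_{i_n})=\sum_j C^j_{i_1\cdots i_n}v_j$ to rewrite the contraction $\sum_a C^j_{a,\cdots}C^a_{\cdots}$ as the composition $l_k[\dots,l_m[\dots],\dots]$, with the skew-symmetry of $l_k$ absorbing the reordering part of the sign $A(\sigma)$. Your explicit even-derivation argument reducing $S^2=0$ on all of $R$ to $S^2\eta^j=0$ is a small step the paper leaves implicit, and the sign reconciliation you flag as the remaining obstacle is treated no more carefully in the paper itself, so the proposal matches the paper's argument in substance.
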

\begin{proof}
Since for every ghost variable $\eta^j$, we can associate it with a
vector $v_j$, by the theorem 3,$S^2\eta^{j}=0$ iff
\begin{eqnarray}
\sum_{\sigma\in S_{k+m-1},1\leq l\leq k}\frac{1}{m!k!}
A(\sigma)C_{a,i_{\sigma(1)}\cdots i_{\sigma(l+m)}\cdots
i_{\sigma(k+m-1)}}^{j}C_{i_{\sigma(l)}\cdots
i_{\sigma(l+m-1}}^{a}v_{j} =0 \label{h}
\end{eqnarray}
Where $$A(\sigma)=(-1)^{\sigma+\sigma(l)+\cdots
+\sigma(l+m-1)+(a,\sigma(l),\cdots
\sigma(l-1))}e(\sigma)e(a,\sigma(l),\cdots \sigma(l-1))$$ That means
LHS of \ref{h} equal to

 $$\sum_{\sigma\in S_{k+m-1},1\leq l\leq k}\frac{1}{m!k!}
A(\sigma)C_{a,i_{\sigma(1)}\cdots i_{\sigma(l+m)}\cdots
i_{\sigma(k+m-1)}}^{j}l_{k}[v_a,v_{\sigma(1)}\cdots v_{\sigma(l-1)},
v_{\sigma(l+m)}\cdots v_{\sigma(k+m-1)}]$$

\begin{eqnarray}
=\sum_{\sigma\in S_{k+m-1},1\leq l\leq k}\frac{1}{m!k!} A(\sigma)
l_{k}[v_{\sigma(1)}\cdots v_{\sigma(l-1)},C_{i_{\sigma(l)}\cdots
i_{\sigma(l+m-1)}}^{a}v_a,
v_{\sigma(l+m)}\cdots v_{\sigma(k+m-1)}]\nonumber\\
=\frac{1}{m!k!} \sum_{\sigma\in S_{k+m-1},1\leq l\leq k}
 A(\sigma)l_{k}[v_{\sigma(1)}\cdots v_{\sigma(l-1)},l_{m}[v_{\sigma(l)}\cdots
v_{\sigma(l+m-1)}],v_{\sigma(l+m)}\cdots v_{\sigma(k+m-1)}]
\end{eqnarray}
\end{proof}
\bf{Remark.} Equations \ref{c} means skew multi-linear maps $l_n$
forms commutator sh Lie structure  .  In \cite{LS},the definition of
commutator sh Lie structure doesn't need $l_n$ to be skew symmetry
we would like to call this skew commutator sh Lie structure
commutator $L_\infty$ algebra denoted by $CL_\infty.$

\section{Linear representation of $CL_\infty$ algebra}

let $A$ be an algebra which contains $F$, we are going to extend
BRST operator from $R$ to $A\otimes R$ such that $S$ is still a
differential. To achieve this, let's consider a series of
skew-symmetry linear maps: $ \rho_{k}:\bigotimes^{n}V
\longrightarrow End(A)$ and for any $f\in A$ set
\begin{eqnarray}
Sf=\sum_{k=1}^{\infty}[\rho_{k}(v_{i_1},\cdots
v_{i_k})f]\eta^{i_1}\cdots \eta^{i_k} \label{d}
\end{eqnarray}
We need to determine the condition that $S^2=0$ when $S$ is extended
from $R$ to $A\otimes R$. Sometime, we denote
$\rho_{k}(v_{i_1},\cdots v_{i_k})f$ by $\rho_{i_1,\cdots i_k}$
\begin{theorem}
$S^2 =0$ iff
\begin{eqnarray}
=\sum_{k+l=n+1,\sigma \in
S_n}\frac{1}{l!}\sum_{m=1}^{k}(-1)^{\sigma}e(\sigma)\rho(v_{\sigma(1)},\cdots
v_{\sigma (m-1)},l_{l}[v_{\sigma(m)},\cdots v_{\sigma
(m+l-1)}],v_{\sigma(m+l)},\cdots v_{\sigma (n)}) f \label{d}
\end{eqnarray}
\end{theorem}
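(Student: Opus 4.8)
The plan is to reduce ``$S^{2}=0$ on $A\otimes R$'' to a single family of relations on the maps $\rho_{k}$, and then to read off those relations by applying $S$ twice, in direct analogy with the derivation of Theorem 3. First I would note that $S$ is an odd derivation of $A\otimes R$, and that the square of an odd derivation is again a derivation: the graded Leibniz rule gives, after the two cross terms cancel, $S^{2}(xy)=S^{2}(x)\,y+x\,S^{2}(y)$, so $S^{2}$ is an even derivation. A derivation vanishes iff it vanishes on a generating set, and $A\otimes R$ is generated as an algebra by $A$ together with the ghosts $\eta^{j}$. On the ghosts we already have $S^{2}\eta^{j}=0$ from Theorem 3, which by Proposition 4 is equivalent to the $CL_\infty$ relations (\ref{c}) among the $l_{n}$ alone and is untouched by the extension. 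Hence $S^{2}=0$ on $A\otimes R$ is equivalent to the single condition $S^{2}f=0$ for every $f\in A$, and it is this condition that I must match to the displayed identity.

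Next I would compute $S^{2}f$ directly. The defining formula for $S$ on $A$ gives $Sf=\sum_{k}[\rho_{k}(v_{i_{1}},\dots,v_{i_{k}})f]\,\eta^{i_{1}}\cdots\eta^{i_{k}}$, whose coefficient lies in $A$ and whose ghost part is a monomial. Applying $S$ once more and using the graded Leibniz rule, the result splits into two families. In the first, $S$ hits the $A$-valued coefficient $\rho_{k}(\dots)f$; by the same defining formula this produces a second map and yields composition terms $\rho_{p}(\dots)\rho_{k}(\dots)f$ carried by ghost monomials of degree $k+p$. In the second, $S$ hits the ghost monomial $\eta^{i_{1}}\cdots\eta^{i_{k}}$; here Lemma 2 distributes the derivation across the factors, and each $S\eta^{i_{l}}=-\sum_{\ell}\frac1{\ell!}C^{i_{l}}_{j_{1}\cdots j_{\ell}}\eta^{j_{1}}\cdots\eta^{j_{\ell}}$ replaces one ghost by a block of $\ell$ ghosts. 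Re-expressing the structure constants through $l_{\ell}$ turns this into the bracket-insertion terms $\rho_{k}(\dots,l_{\ell}[\dots],\dots)f$, carried by ghost monomials of degree $k+\ell-1$ and weighted by $1/\ell!$.

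Finally I would gather the terms of a fixed total ghost degree $n$: the composition family contributes the indices with $k+p=n$, while the bracket family contributes those with $k+\ell=n+1$ — this is the relation $k+l=n+1$ of the statement, in which $l$ is the arity of the inserted bracket $l_{l}$ and the inner sum $\sum_{m=1}^{k}$ ranges over the insertion position $m$. Since the ghost monomials $\eta^{i_{1}}\cdots\eta^{i_{n}}$ are graded skew-symmetric, only the total skew-symmetrization of each $A$-valued coefficient survives; applying the operator $\hat S$ of Lemma 1, and using the skew-symmetry of $\rho_{k}$ and of $l_{\ell}$, rewrites the sum over $S_{n}$ with the sign $(-1)^{\sigma}e(\sigma)$. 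Equating the coefficient of each independent monomial to zero then says that, at every $n$, the displayed bracket family cancels the composition family $\rho_{p}(\dots)\rho_{k}(\dots)f$, so that the two stand on opposite sides of the asserted identity. The factor $1/l!$ is inherited directly from the definition of $S$ on the ghosts, while the symmetrization supplies the remaining normalization, exactly as the factor $1/(k+m-1)!$ appeared in the proof of Theorem 3.

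I expect the difficulty to lie in the sign bookkeeping rather than in the structure. The delicate points are the Koszul signs $(-1)^{\eta^{i_{1}}+\cdots+\eta^{i_{l-1}}}$ produced when $S$ is commuted past the leading ghosts in Lemma 2, the degree shift $\deg\eta^{i}=\deg v_{i}+1$ that enters every such sign, and the re-indexing forced when the freshly inserted block of $\ell$ ghosts is permuted into its final slot. These must be tracked precisely so that, after skew-symmetrization, all parities collapse to $(-1)^{\sigma}e(\sigma)$, just as in Theorem 3. A secondary, purely combinatorial check is that the factorials generated along the way reduce to the single $1/l!$ recorded in the statement.
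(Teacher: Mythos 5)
Your proposal follows essentially the same route as the paper: split $S^{2}f$ via the graded Leibniz rule into the composition family (where $S$ hits the coefficient $\rho_{k}(\dots)f$, giving $k+p=n$) and the bracket-insertion family (where $S$ hits the ghost monomial via Lemma 2, giving $k+l=n+1$), then anti-symmetrize with $\hat S$ and equate coefficients of independent ghost monomials. Your preliminary observation that $S^{2}$ is an even derivation, so that it suffices to check the generators $f\in A$ separately from the $\eta^{j}$ already handled by Theorem 3, is a clean justification the paper leaves implicit, but the substance of the argument is the same.
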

\begin{proof}
We still impose the condition that $S$ should be an odd derivative,
by the lemma 2, we have:
\begin{eqnarray}
& & S^2 f=S(\sum_{k=1}^{\infty}(\rho_{i_1,\cdots i_k})f
\eta^{i_1}\cdots
\eta^{i_k}) \nonumber\\
& & =\sum_{k=1}^{\infty}S(\rho_{i_1,\cdots i_k})f \eta^{i_1}\cdots
\eta^{i_k}+\sum_{k=1}^{\infty}(\rho_{i_1,\cdots i_k}f)
S(\eta^{i_1}\cdots \eta^{i_k})
\end{eqnarray}
For the convenience of simplification, we set the first part of
above sum to be $A$ and the second part to be$B$,then
\begin{eqnarray}
A=\sum_{k=1}^{\infty}[\sum_{l=1}^{\infty}\rho_{j_1,\cdots
j_l}(\rho_{i_1,\cdots i_k})f\eta^{j_1}\cdots \eta^{j_l}
]\eta^{i_1}\cdots \eta^{i_k}\nonumber\\
=\sum_{n=2}^{\infty}[\sum_{k+l=n}\rho_{i_1,\cdots
i_k}(\rho_{i_{k+1},\cdots i_{k+l}})f\eta^{i_1}\cdots \eta^{i_{k+l}}]
\end{eqnarray} and
\begin{eqnarray}
& & B=\sum_{k=1}^{\infty}(\rho_{i_1,\cdots i_k}f)(\sum_{m=1}^{k}
(-1)^{\eta^{i_1}+\cdots+\eta^{i_{m-1}}}
\eta^{i_1}\cdots S(\eta^{i_m})\cdots \eta^{i_k})\nonumber\\
& & =\sum_{k=1}^{\infty}(\rho_{i_1,\cdots
i_k}f)(\sum_{m=1}^{k}(-1)^{\eta^{i_1}+\cdots+\eta^{i_{m-1}}}\sum_{l=1}^{\infty}\frac{-1}{l!}C_{j_{1}\cdots
j_{l}}^{i_m}\eta^{i_1}\cdots
\eta^{i_{m-1}}[\eta^{j_1}\cdots\eta^{j_l}]\eta^{i_{m+1}}\cdots
\eta^{i_k})\nonumber\\
& &
=\sum_{n=1}^{\infty}[\sum_{k+l=n+1}\frac{1}{l!}\sum_{m=1}^{k}(\rho_{j_1,\cdots
i_{k-1}}f)(-1)^{\eta^{j_1}+\cdots+\eta^{j_{m-1}}}C_{j_{m}\cdots
j_{m+l-1}}^{i_m}\eta^{j_1}\cdots
\eta^{j_{m-1}}[\eta^{j_m}\cdots\eta^{j_{m+l-1}}]\eta^{j_{m+l}}\cdots
\eta^{i_{k+l-1}})]\nonumber\\
& &
=\sum_{n=1}^{\infty}[\sum_{k+l=n+1}\frac{1}{l!}\sum_{m=1}^{k}(\rho_{j_1,\cdots
i_{k-1}}f)(-1)^{\eta^{j_1}+\cdots+\eta^{j_{m-1}}}C_{j_{m}\cdots
j_{m+l-1}}^{i_m}\eta^{j_1}\cdots \eta^{j_{n}}]\nonumber\\
& &
=\sum_{n=1}^{\infty}[\sum_{k+l=n+1}\frac{1}{l!}\sum_{m=1}^{k}(\rho_{j_1,\cdots
a \cdots
i_{k-1}}f)(-1)^{\eta^{j_1}+\cdots+\eta^{j_{m-1}}}C_{j_{m}\cdots
j_{m+l-1}}^{a}\eta^{j_1}\cdots \eta^{j_{n}}]
\end{eqnarray}
Since ghost variables is graded commutative, we need to
anti-symmetrize terms $A$ and $B$ respectively.
\begin{eqnarray}
\hat{S}(A)=\sum_{n=2}^{\infty}[\sum_{k+l=n,\sigma \in
S_n}(-1)^{\sigma}e(\sigma)\rho_{i_(\sigma(1),\cdots i_{\sigma
(k)}}(\rho_{i_{\sigma(k+1)},\cdots
i_{\sigma(k+l)}})f]\eta^{i_1}\cdots \eta^{i_{k+l}}\nonumber\\
=\sum_{n=2}^{\infty}[\sum_{k+l=n,\sigma \in
S_n}(-1)^{\sigma}e(\sigma)\rho_{i_(\sigma(1),\cdots i_{\sigma
(k)}}(\rho_{i_{\sigma(k+1)},\cdots i_{\sigma(n)}})]f\eta^{i_1}\cdots
\eta^{i_{n}}
\end{eqnarray}
\begin{eqnarray}
\hat{S}(B)=\sum_{n=1}^{\infty}[\sum_{k+l=n+1,\sigma \in
S_n}\frac{1}{l!}\sum_{m=1}^{k}A(\sigma)(\rho_{j_{\sigma(1)},\cdots a
\cdots i_{\sigma(k-1)}}f)C_{j_{\sigma(m)}\cdots
j_{\sigma(m+l-1)}}^{a}\eta^{j_1}\cdots \eta^{j_{n}}]
\end{eqnarray}
Where
$$A(\sigma)=(-1)^{\sigma}e(\sigma)(-1)^{\eta^{j_{\sigma(1)}}+\cdots+\eta^{j_{\sigma(m-1)}}}$$
Then we get

$$\sum_{k+l=n,\sigma \in
S_n}(-1)^{\sigma}e(\sigma)\rho_{i_(\sigma(1),\cdots i_{\sigma
(k)}}(\rho_{i_{\sigma(k+1)},\cdots i_{\sigma(n)}})f $$
\begin{eqnarray}
=\sum_{k+l=n+1,\sigma \in
S_n}\frac{1}{l!}\sum_{m=1}^{k}(-1)^{\sigma}e(\sigma)(\rho_{j_{\sigma(1)},\cdots
a \cdots
i_{\sigma(k-1)}}f)(-1)^{\eta^{j_{\sigma(1)}}+\cdots+\eta^{j_{\sigma(m-1)}}}C_{j_{\sigma(m)}\cdots
j_{\sigma(m+l-1)}}^{a}
\end{eqnarray}
Without the loss of generality, we omit the subscript $j$ and use
the notation $\rho_{k}(v_1,\cdots v_k)f=\rho_{1,\cdots k}$, we get

$$\sum_{k+l=n,\sigma \in
S_n}(-1)^{\sigma}e(\sigma)\rho(v_{\sigma(1)},\cdots v_{\sigma
(k)})(\rho(v_{\sigma(k+1)},\cdots v_{\sigma(n)})f$$
\begin{eqnarray}
=\sum_{k+l=n+1,\sigma \in
S_n}\frac{1}{l!}\sum_{m=1}^{k}(-1)^{\sigma}e(\sigma)\rho(v_{\sigma(1)},\cdots
v_{\sigma (m-1)},l_{l}[v_{\sigma(m)},\cdots v_{\sigma
(m+l-1)}],v_{\sigma(m+l)},\cdots v_{\sigma (n)}) f \label{d}
\end{eqnarray}
Then follows the theorem.
\end{proof}
\begin{definition}
If $l_n: \bigotimes^{n} V\longrightarrow V$ is $CL_\infty$
algebra,and a collection of maps:$\rho_{k}:\bigotimes^{n}
V\longrightarrow End(A)$ satisfy equations \ref{d},we call
$\rho_{k}$ is a linear representation of $CL_\infty$ algebra on $A$.
\end{definition}
In the BRST theory or BV formulism, structure coefficient $C_{j_1
\cdots j_k}^{i}$ is not a constant anymore, they can take values in
some function ring. Then the pair$\rho_{k},l_n$ is not commutator sh
Lie structure and representation, but a kind of algebraoid, that
means the coefficient $C_{j_1 \cdots j_k}^{i}\in A$. An interesting
problem is that when $S^2\eta^{i}=0$ still holds if $C_{j_1 \cdots
j_k}^{i}\in A$. First of all, we need following definition:
\begin{definition}
We call $\rho_{k},l_n$ forms a $CL_{\infty}$ algebraoid if it
satisfies following equations:
$$\sum_{k+l=n,\sigma \in
S_n}\frac{1}{k!}[\chi(\sigma)(\rho_{i_{\sigma(1)},\cdots
i_{\sigma(m)}}C_{i_{\sigma(m+1)}\cdots i_{\sigma(k+m)}}^{j})$$
\begin{eqnarray}
+\sum_{l=1}^{k}(-1)^{i_{\sigma(1)+\cdots+i_{\sigma(l-1)}}}\hat{\chi(\sigma)}\frac{1}{(m+1)!}C_{i_{1}\cdots
i_{l-1},a\cdots i_{k+m-1}}^{j}C_{i_{l}\cdots i_{\sigma(l+m)}}^{a}]=0
\end{eqnarray}
\end{definition}
We have
\begin{theorem}
If $\rho_{k},l_n$ is a $CL_{\infty}$ algebraoid, $S^2=0$
\end{theorem}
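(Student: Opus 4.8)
The plan is to reduce the statement to a computation on generators. Since $S$ is an odd derivation, its square satisfies $S^{2}(\alpha\beta)=S^{2}(\alpha)\beta+\alpha S^{2}(\beta)$, so that $S^{2}$ is an even derivation of $A\otimes R$; hence it suffices to verify $S^{2}=0$ on the generators $\eta^{j}$ and on the elements $f\in A$, after which the graded Leibniz rule propagates the vanishing to every monomial. The essential and only genuinely new point, already singled out as the ``interesting problem'' above, is to show $S^{2}\eta^{j}=0$ once the coefficients $C^{j}_{i_{1}\cdots i_{k}}$ are allowed to lie in $A$ rather than in $F$.

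First I would recompute $S^{2}\eta^{j}$ following the proof of Theorem 3, but now treating each $C^{j}_{i_{1}\cdots i_{k}}$ as a genuine element of $A$. Writing $S\eta^{j}=-\sum_{k}\frac{1}{k!}C^{j}_{i_{1}\cdots i_{k}}\eta^{i_{1}}\cdots\eta^{i_{k}}$ and applying $S$ once more, the graded Leibniz rule of Lemma 2 splits the result into two families: the terms in which $S$ differentiates the ghost monomial $\eta^{i_{1}}\cdots\eta^{i_{k}}$, and the terms in which $S$ differentiates the coefficient $C^{j}_{i_{1}\cdots i_{k}}$ through the extension formula $Sg=\sum_{m}(\rho_{m}(v_{i_{1}},\cdots v_{i_{m}})g)\,\eta^{i_{1}}\cdots\eta^{i_{m}}$. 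The first family reproduces verbatim the quadratic $C\cdot C$ contribution analysed in Theorem 3, whereas the second family is new and yields precisely the mixed contribution $\rho_{i_{\sigma(1)}\cdots i_{\sigma(m)}}\!\big(C^{j}_{i_{\sigma(m+1)}\cdots i_{\sigma(k+m)}}\big)$. When $C\in F$ the second family is absent and one recovers Theorem 3; keeping it is exactly what upgrades the structure equations to the algebroid equations.

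Next I would anti-symmetrise each family in the free ghost indices by means of the operator $\hat{S}$ of Lemma 1, exactly as in the proofs of Theorem 3 and Theorem 5, thereby absorbing the permutation sign $(-1)^{\sigma}e(\sigma)$ together with the degree factors $(-1)^{\eta^{i_{\sigma(1)}}+\cdots}$ into the prefactors $\chi(\sigma)$ and $\hat\chi(\sigma)$ of Definition 7. Collecting the coefficient of a fixed monomial $\eta^{i_{1}}\cdots\eta^{i_{n}}$, the two families assemble into a single expression which is, term for term, the left-hand side of the defining identity of a $CL_{\infty}$ algebroid; the algebroid hypothesis therefore forces the coefficient of every ghost monomial to vanish, giving $S^{2}\eta^{j}=0$. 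The complementary vanishing $S^{2}f=0$ for $f\in A$ is essentially the content of the representation computation of Theorem 5, whose derivation never used the constancy of the $C$'s, so that all generators are annihilated and hence $S^{2}=0$ on $A\otimes R$.

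The step I expect to be the main obstacle is the sign and symmetry bookkeeping for the new $\rho(C)$ family. Because $C^{j}_{i_{1}\cdots i_{k}}$ is now an element of $A$ standing to the left of a string of ghosts, carrying $S$ onto it through the graded Leibniz rule generates degree signs that must be reconciled with those of the purely combinatorial $C\cdot C$ family; only if these two sign systems agree do the two families combine into the single algebroid identity of Definition 7 rather than into two separate conditions. Verifying this compatibility, that is, matching $\chi(\sigma)$ against $\hat\chi(\sigma)$ and the factorials $\frac{1}{k!}$ and $\frac{1}{(m+1)!}$ after the re-summation over the insertion position $l$ and the shuffle $\sigma$, is the delicate core of the argument; the remainder is the routine re-indexing already performed in Theorems 3 and 5.
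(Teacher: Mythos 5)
Your proposal follows essentially the same route as the paper: expand $S^{2}\eta^{j}$ via the graded Leibniz rule into the new $\rho(C)$ family plus the quadratic $C\cdot C$ family of Theorem 3, anti-symmetrize with $\hat{S}$, and identify the resulting coefficient of each ghost monomial with the defining equations of the $CL_{\infty}$ algebroid. Your additional remarks---that $S^{2}$ is an even derivation so it suffices to check generators, and that $S^{2}f=0$ must also be verified---are sound and in fact slightly more careful than the paper, which only computes $S^{2}\eta^{j}$.
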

\begin{proof}
Since \begin{eqnarray} & &
S^2\eta^{i}=S(-\sum_{k=1}^{\infty}\frac{1}{k!}C_{i_{1}\cdots
i_{n}}^{j}\eta^{i_1}\cdots\eta^{i_k})\nonumber\\
& & =-\sum_{k=1}^{\infty}\frac{1}{k!}[S(C_{i_{1}\cdots
i_{n}}^{j})\eta^{i_1}\cdots\eta^{i_k}+C_{i_{1}\cdots
i_{n}}^{j}(\eta^{i_1}\cdots\eta^{i_k})]
\end{eqnarray}
By using the calculation of the proof in theorem 3, we have

$$S^2\eta^{i}=-\sum_{k=1}^{\infty}\frac{1}{k!}[\sum_{l=1}^{\infty}(\rho_{j_1,\cdots
j_l}C_{i_{1}\cdots i_{n}}^{j})\eta^{j_1}\cdots \eta^{j_l}
\eta^{i_1}\cdots
\eta^{i_k}$$
\begin{eqnarray}
+\sum_{m=1}^{\infty}\frac{1}{m!}\sum_{l=1}^{k}(-1)^{i_1+\cdots+i_{l-1}}
C_{i_{1}\cdots i_{l-1},a\cdots i_{k+m-1}}^{j}C_{i_{l}\cdots
i_{l+m-1}}^{a}\eta^{i_1}\cdots\eta^{i_{k+m-1}}]\nonumber\\
\end{eqnarray}
$$=-\sum_{n=2}^{\infty}\sum_{k+l=n}\frac{1}{k!}(\rho_{i_1,\cdots
i_l}C_{i_{l+1}\cdots i_{l+k}}^{j})\eta^{i_1}\cdots
\eta^{i_{k+l}}$$
\begin{eqnarray}
-\sum_{n=2}^{\infty}\sum_{k+l=n+1}\sum_{l=1}^{k}(-1)^{i_1+\cdots+i_{l-1}}
C_{i_{1}\cdots i_{l-1},a\cdots i_{k+m-1}}^{j}C_{i_{l}\cdots
i_{l+m-1}}^{a}\eta^{i_1}\cdots\eta^{i_{k+m-1}}\nonumber\\
\end{eqnarray}
$$=-\sum_{n=2}^{\infty}\sum_{k+l=n}\frac{1}{k!}[\rho_{i_1,\cdots
i_m}C_{i_{m+1}\cdots i_{k+m}}^{j}$$
$$+\sum_{l=1}^{k}(-1)^{i_1+\cdots+i_{l-1}}\frac{1}{(m+1)!}C_{i_{1}\cdots
i_{l-1},a\cdots i_{k+m-1}}^{j}C_{i_{l}\cdots
i_{l+m}}^{a}]\eta^{i_1}\cdots\eta^{i_{k+m}}$$

$$=-\frac{1}{n!}\sum_{n=2}^{\infty}\sum_{\sigma \in
S_{n},k+l=n}\frac{1}{k!}[\chi(\sigma)(\rho_{i_{\sigma(1)},\cdots
i_{\sigma(m)}}C_{i_{\sigma(m+1)}\cdots i_{\sigma(k+m)}}^{j})$$
$$+\sum_{l=1}^{k}(-1)^{i_{\sigma(1)}+\cdots+i_{\sigma(l-1)}}\hat{\chi(\sigma)}\frac{1}{(m+1)!}C_{i_{1}\cdots
i_{l-1},a\cdots i_{k+m-1}}^{j}C_{i_{l}\cdots
i_{\sigma(l+m)}}^{a}]\eta^{i_{\sigma(1)}}\cdots\eta^{i_{\sigma(n)}}$$
Where $\chi(\sigma),\hat{\chi(\sigma)}$ are corresponding signs of
permutations. Therefore $S^2=0$ iff

$$\sum_{k+l=n,\sigma \in
S_n}\frac{1}{k!}[\chi(\sigma)(\rho_{i_{\sigma(1)},\cdots
i_{\sigma(m)}}C_{i_{\sigma(m+1)}\cdots
i_{\sigma(k+m)}}^{j})$$
\begin{eqnarray}
+\sum_{l=1}^{k}(-1)^{i_{\sigma(1)+\cdots+i_{\sigma(l-1)}}}\hat{\chi(\sigma)}\frac{1}{(m+1)!}C_{i_{1}\cdots
i_{l-1},a\cdots i_{k+m-1}}^{j}C_{i_{l}\cdots i_{\sigma(l+m)}}^{a}]=0
\end{eqnarray}
\end{proof}
\section{$CL_\infty$ algebra differential and $GA_{\infty}$ algebra differential}
From now on, we will define  $CL_\infty$  algebra differential for
any given pair $\rho_{n},l_m$ where $\rho_n$ is linear
representation of $CL_\infty$ algebra $l_m$. Let $V$ be a
$\rho_{n},l_m$ module,i.e$$\rho_n: \underbrace{L\times L\cdots
\times L\longrightarrow EndV}$$ Where $L$ is the underlying space of
BRST algebra $L_n$. Let $C^{n}(L,V)$ be a n-dimensional $V$-cochain
space, i.e. $$\omega_n: \underbrace{L\times L\cdots \times
L\longrightarrow V}$$is a skew-symmetry multi-linear map, set $
C^{*}(L,V)=\sum_{n=0}^{\infty}C^{n}(L,V)$. Then we can define a
coboundary operator on the complex $C^{*}(L,V)$:
\begin{definition}
For any $\omega_n\in C^{n}(L,V)$, we have $k$-component of BRST
operator : $$S_k: C^{n}(L,V)\longrightarrow C^{n+k-1}(L,V)$$ as
follows:

$$(S_k\omega_{n})(X_{j_1},\cdots
X_{j_{n+k-1}})$$

$$
=\frac{1}{(n+k-1)!}\sum_{\sigma\in S_{n+k-1}}
\chi(\sigma)\rho(X_{j_{\sigma(1)}}\cdots
X_{j_{\sigma(l-1)}})(\omega(X_{j_{\sigma(k)}}\cdots
X_{j_{\sigma(n+k-1)}})$$

$$-\frac{1}{k!}\frac{1}{(n+k-1)!}\sum_{l=1,\sigma\in
S_{n+k-1}}^{n}C(\sigma)\omega_{n}( X_{j_{\sigma(1)}}\cdots
X_{j_{\sigma(l-1)}},l_{k}[X_{j_{\sigma(l)}}\cdots
X_{j_{\sigma(k+l)}}]\cdots X_{j_{\sigma(n+k-1)}})$$ Where

$C(\sigma)=\chi(\sigma)(-1)^{j_{\sigma(1)}+\cdots+j_{\sigma(l-1)}}$
\end{definition}
Assume that $X_i,i\in I$ is a basis of $CL_\infty$ algebra $l_n$,
for every $X_i$, consider its dual element ghost $\eta^i$. Then
there is a natural correspondence between the complex $C^{*}(L,V)$
and the ghost complex $C[\eta^i]\otimes V$ via
$$\tilde{\omega_{n}}=\omega_{n}(X_{i_1},\cdots
X_{i_{n}})\eta^{i_1}\cdots \eta^{i_n}$$ If $\rho_{n},l_m$ is a
$CL_\infty$ representation pair , we can define an odd derivative
$\tilde{S}$ on $C[\eta^i]\otimes V$  as follows:
\begin{eqnarray}
\tilde{S_k}\eta^i=-\sum_{k=1}^{\infty}\frac{1}{k!}C_{j_{1}\cdots
j_{n}}^{i}\eta^{j_1}\cdots\eta^{j_k}
\end{eqnarray}
and
\begin{eqnarray}
\tilde{S_k}f=\rho(X_{j_1)}\cdots
X_{j_{k-1}})\eta^{j_1}\cdots\eta^{j_{k-1}}
\end{eqnarray}
We have
\begin{theorem}
\begin{eqnarray}
\tilde{S_k}\tilde{\omega_n}=\widetilde{S_{k}\omega_n}\label{e}
\end{eqnarray}
\end{theorem}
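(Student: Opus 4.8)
The plan is to compute $\tilde{S_k}\tilde{\omega_n}$ directly from the definitions and to show that, after collecting and antisymmetrizing terms, the coefficient of each ghost monomial reproduces exactly the two-part formula for $S_k\omega_n$ given in Definition 9. Writing $\tilde{\omega_n}=\omega_n(X_{i_1},\cdots X_{i_n})\eta^{i_1}\cdots\eta^{i_n}$, the decisive point is that $\tilde{S_k}$ is an odd derivative, so by Lemma 2 its action on this product splits into two groups exactly as in the proof of Theorem 5: a group $A$ in which the derivative falls on the $V$-valued coefficient $\omega_n(X_{i_1},\cdots X_{i_n})$, and a group $B$ in which it falls on one of the ghost generators $\eta^{i_l}$. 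I would organize the whole argument around this split, since it mirrors the $S^2 f$ computation already carried out in Theorem 5 with $f$ now replaced by the coefficient $\omega_n(X_{i_1},\cdots X_{i_n})$.

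First I would isolate group $A$. Applying the representation rule $\tilde{S_k}$ on coefficients (the $\rho$-part of the derivative, matching equation \ref{d}) produces precisely the $\rho$-type terms $\rho(X_{j_{\sigma(1)}}\cdots X_{j_{\sigma(l-1)}})(\omega(X_{j_{\sigma(k)}}\cdots X_{j_{\sigma(n+k-1)}}))$, while the graded Leibniz signs of Lemma 2 supply the parities $(-1)^{\eta^{i_1}+\cdots}$ needed to build up the sign $\chi(\sigma)$. Next I would treat group $B$, where each $\tilde{S_k}\eta^{i_l}=-\frac{1}{k!}C^{i_l}_{j_1\cdots j_k}\eta^{j_1}\cdots\eta^{j_k}$ inserts a fresh block of ghosts in place of $\eta^{i_l}$; passing the derivative past the preceding factors $\eta^{i_1}\cdots\eta^{i_{l-1}}$ contributes the sign $(-1)^{\eta^{i_1}+\cdots+\eta^{i_{l-1}}}$ of Lemma 2. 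Translating the structure constant $C^{a}_{\cdots}$ back into the bracket $l_k$ through equation \ref{a} then converts this group into the $l_k$-type terms of Definition 9, with the internal sum over $l$ becoming the inner sum $\sum_{m=1}^{n}$ appearing there.

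Since the ghost monomial $\eta^{j_1}\cdots\eta^{j_{n+k-1}}$ is already graded antisymmetric, only the antisymmetric part of each collected coefficient survives; I would therefore apply the operator $\hat{S}$ of Lemma 1 to both groups, exactly as in Theorems 3 and 5. This step is what manufactures the sum over $\sigma\in S_{n+k-1}$ together with the normalizing factor $\frac{1}{(n+k-1)!}$ recorded in Definition 9. Reading off the coefficient of $\eta^{j_1}\cdots\eta^{j_{n+k-1}}$ then yields precisely $(S_k\omega_n)(X_{j_1},\cdots X_{j_{n+k-1}})$, which by the definition of the tilde map is the monomial expansion of $\widetilde{S_k\omega_n}$, establishing the identity.

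The main obstacle will be the sign and combinatorial bookkeeping in the final matching step. One must verify that the graded Leibniz signs of Lemma 2, the permutation signs introduced by $\hat{S}$, and the parities carried by the ghost degrees $\deg\eta^i=\deg v_i+1$ combine to reproduce exactly $C(\sigma)=\chi(\sigma)(-1)^{j_{\sigma(1)}+\cdots+j_{\sigma(l-1)}}$, and that the prefactors $\frac{1}{k!}$ and $\frac{1}{(n+k-1)!}$ emerge with the correct multiplicities after antisymmetrization rather than over- or under-counting permutations that fix the inserted $l_k$-block. Because this is structurally the same splitting already performed for $S^2 f$ in Theorem 5, I expect to reuse that computation almost verbatim; the only genuinely new ingredient is the closing identification of the antisymmetrized coefficient with the cochain-level formula of Definition 9.
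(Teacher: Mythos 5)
Your proposal follows essentially the same route as the paper's own proof: split $\tilde{S_k}\tilde{\omega_n}$ via the odd Leibniz rule into the part $A$ acting on the coefficient $\omega_{j_1\cdots j_n}$ (yielding the $\rho$-terms) and the part $B$ acting on the ghosts (yielding the $l_k$-terms via the structure constants), then antisymmetrize with $\hat{S}$ to produce the $\frac{1}{(n+k-1)!}\sum_{\sigma\in S_{n+k-1}}$ normalization and read off the coefficient against Definition 9. The sign and counting caveats you flag are exactly the bookkeeping the paper carries out in computing $A$ and $B$, so no change of method is needed.
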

\begin{proof}
We will prove this theorem by solving $S_k$ in the equation\ref{e}
uniquely. Since
\begin{eqnarray}
& &\tilde{S_k}\tilde{\omega_n}=\tilde{S_k}(\omega_{j_1\cdots
j_n}\eta^{j_1}\cdots\eta^{j_{n}})\nonumber\\
& &=\tilde{S_k}(\omega_{j_1\cdots j_n})\eta^{j_1}\cdots\eta^{j_{n}}+
\omega_{j_1\cdots
j_n}\tilde{S_k}(\eta^{j_1}\cdots\eta^{j_{n}})\nonumber\\
& &=(\rho_{m_1\cdots m_{k-1}}\omega_{j_1\cdots
j_n})\eta^{m_1}\cdots\eta^{m_{k-1}}\eta^{j_1}\cdots\eta^{j_{n}}+\omega_{j_1\cdots
j_n}[\sum_{l=1}^{k} (-1)^{j_1+\cdots+j_{l-1}} \eta^{j_1}\cdots
\tilde{S_{k}(\eta^{i_l})}\cdots \eta^{j_n}]\nonumber
\end{eqnarray}
Set the first part to be $A$ ,the second part to be $B$,then
\begin{eqnarray}
& &A=(\rho_{j_1\cdots j_{k-1}}\omega_{j_k\cdots j_{n+k-1}})
\eta^{j_1}\cdots\eta^{j_{n+k-1}}\nonumber\\
& &=\frac{1}{(n+k-1)!}\sum_{\sigma\in S_{n+k-1}}
\chi(\sigma)(\rho_{j_{\sigma(1)}\cdots
j_{\sigma(k-1)}}\omega_{j_{\sigma(k)}\cdots j_{\sigma(n+k-1)}}
\eta^{j_1}\cdots\eta^{j_{n+k-1}}\nonumber\\
& &=\frac{1}{(n+k-1)!}\sum_{\sigma\in S_{n+k-1}}
\chi(\sigma)\rho(X_{j_{\sigma(1)}}\cdots
X_{j_{\sigma(l-1)}})(\omega(X_{j_{\sigma(k)}}\cdots
X_{j_{\sigma(n+k-1)}})\nonumber
\end{eqnarray}
\begin{eqnarray}
& &B=\omega_{j_1\cdots j_n}[\sum_{l=1}^{k} (-1)^{j_1+\cdots+j_{l-1}}
\eta^{j_1}\cdots [-\frac{1}{k!}C_{i_{1}\cdots
i_{n}}^{j_l}\eta^{i_1}\cdots\eta^{i_k}]\cdots \eta^{j_n}]\nonumber\\
& &=\omega_{j_1\cdots j_n}[\sum_{l=1}^{k} (-1)^{j_1+\cdots+j_{l-1}}
 (-\frac{1}{k!})C_{i_{1}\cdots
i_{n}}^{j_l}\eta^{j_1}\cdots [\eta^{i_1}\cdots\eta^{i_k}]\cdots \eta^{j_n}]\nonumber\\
& &=\sum_{l=1}^{k} (-1)^{j_1+\cdots+j_{l-1}}
 (-\frac{1}{k!})\omega_{j_1\cdots j_n}C_{i_{1}\cdots
i_{n}}^{j_l}\eta^{j_1}\cdots [\eta^{i_1}\cdots\eta^{i_k}]\cdots
\eta^{j_n}\nonumber\\
& &=\sum_{l=1}^{k} (-1)^{j_1+\cdots+j_{l-1}}
 (-\frac{1}{k!})\omega_{j_1\cdots j_{k-1},a,j_{l+k}\cdots j_{n+k-1}}C_{j_{l}\cdots
j_{l+k-1}}^{a}\eta^{j_1}\cdots \eta^{j_{n+k-1}} \nonumber\\
& &=-\frac{1}{k!}\sum_{l=1}^{k} (-1)^{j_1+\cdots+j_{l-1}}
  \omega_{n}(X_{j_1}\cdots X_{j_{l-1}},X_a,X_{j_{l+k}}\cdots X_{j_{n+k-1}})C_{j_{l}\cdots
j_{l+k-1}}^{a}\eta^{j_1}\cdots \eta^{j_{n+k-1}} \nonumber\\
& &=-\frac{1}{k!}\sum_{l=1}^{k} (-1)^{j_1+\cdots+j_{l-1}}
  \omega_{n}(X_{j_1}\cdots X_{j_{l-1}},l_{k}(X_{j_l}\cdots X_{j_{l+k-1}}),X_{j_{l+k}}\cdots X_{j_{n+k-1}})\eta^{j_1}\cdots \eta^{j_{n+k-1}} \nonumber\\
& &=\sum_{l=1,\sigma\in S_{n+k-1}}^{n}D(\sigma)\omega_{n}(
X_{j_{\sigma(1)}}\cdots
X_{j_{\sigma(l-1)}},l_{k}[X_{j_{\sigma(l)}}\cdots
X_{j_{\sigma(k+l)}}]\cdots X_{j_{\sigma(n+k-1)}} )\eta^{j_1}\cdots
\eta^{j_{n+k-1}}\nonumber
\end{eqnarray}
Where
$D(\sigma)=-\frac{1}{k!(n+k-1)!}(-1)^{j_{\sigma(1)}+\cdots+j_{\sigma(l-1)}}e(\sigma)$
\end{proof}
\begin{proposition}
The formal series $S=\sum_{k=1}^{\infty}S_k$ is a nilpotent operator
on the complex $C^{*}(L,V)$
\end{proposition}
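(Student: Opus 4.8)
The plan is to deduce $S^{2}=0$ from the nilpotency of the ghost-complex operator $\tilde{S}=\sum_{k=1}^{\infty}\tilde{S_k}$, transporting the vanishing across the correspondence $\Phi:\omega_n\mapsto\tilde{\omega_n}=\omega_{n}(X_{i_1},\cdots X_{i_n})\eta^{i_1}\cdots\eta^{i_n}$. First I would observe that $\Phi$ from $C^{*}(L,V)$ into $C[\eta^i]\otimes V$ is injective: a skew multilinear cochain is completely determined by its values on the basis $\{X_i\}$, and these values are exactly the coefficients carried by $\tilde{\omega_n}$, so $\tilde{\omega_n}=0$ forces $\omega_n=0$. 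Hence it suffices to prove $\widetilde{S^{2}\omega_n}=0$ for each homogeneous $\omega_n$.

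Next I would use Theorem 10. The identity $\tilde{S_k}\tilde{\omega_n}=\widetilde{S_k\omega_n}$ holds for every $k$, and since for a fixed homogeneous $\omega_n$ the contribution of $S_k\omega_n$ sits in the single total degree $n+k-1$, summing over $k$ is legitimate grade by grade and gives $\tilde{S}\tilde{\omega}=\widetilde{S\omega}$; that is, $\Phi$ intertwines $S$ and $\tilde{S}$. Applying this relation twice,
\[
\widetilde{S^{2}\omega_n}=\tilde{S}\big(\widetilde{S\omega_n}\big)=\tilde{S}\big(\tilde{S}\tilde{\omega_n}\big)=\tilde{S}^{2}\tilde{\omega_n},
\]
so the problem collapses to showing that the odd derivative $\tilde{S}$ on $C[\eta^i]\otimes V$ squares to zero.

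For this last point I would invoke Sections 3 and 4, noting that $\tilde{S}$ is precisely the extended BRST operator studied there with the module $V$ in the role of the algebra $A$ and $\rho$ its representation. Because $\tilde{S}$ is an odd derivation (the Leibniz rule, Lemma 2), its square $\tilde{S}^{2}$ is an even derivation, so it is enough to verify $\tilde{S}^{2}=0$ on the algebra generators. On the ghosts one has $\tilde{S}^{2}\eta^{j}=0$ since $l_n$ is assumed to be a $CL_\infty$ algebra, which is exactly the structure condition of Proposition 4; on $V$ one has $\tilde{S}^{2}f=0$ since $\rho$ is assumed to be a linear representation, which is exactly the representation condition of Theorem 5 and Definition 6. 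Thus $\tilde{S}^{2}=0$ on all of $C[\eta^i]\otimes V$, and combining this with the display above and the injectivity of $\Phi$ yields $S^{2}\omega_n=0$ for all $\omega_n$, i.e. $S^{2}=0$.

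I expect the main obstacle to be the coordinated use of both halves of the $CL_\infty$ representation data in the step $\tilde{S}^{2}=0$: the $CL_\infty$ structure equations for $l_n$ and the representation equations for $\rho$ must each be applied on the appropriate generators, and one must confirm that the two derivations-on-generators computations assemble into the single even derivation $\tilde{S}^{2}$. A secondary bookkeeping point is that, although $S=\sum_k S_k$ is a formal series, on any homogeneous $\omega_n$ each total degree of $S\omega_n$ and of $S^{2}\omega_n$ receives only finitely many contributions, so the term-by-term identity of Theorem 10 may be summed freely.
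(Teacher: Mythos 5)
Your proof is correct and follows exactly the route the paper intends: the paper states this proposition without any proof, but Theorem 10 is set up precisely so that nilpotency can be transported from the ghost complex (where it follows from the $CL_\infty$ structure equations of Proposition 4 on the $\eta^j$ and the representation condition of Theorem 5 and Definition 6 on $V$) back to $C^{*}(L,V)$ via the injective correspondence $\omega_n\mapsto\tilde{\omega_n}$. Your added care about $\tilde{S}^{2}$ being an even derivation (hence determined by its values on generators) and about the grade-by-grade finiteness of the formal series supplies exactly the details the paper leaves implicit.
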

We call above $S$ $CL_\infty$  algebra differential induced by
$CL_\infty$  algebra representation pair $(\rho_n, l_m)$.

So far we have studied $CL_\infty$ algebra differential,furthermore
it can be generalized to broader category which we will call
$GA_{\infty}$ algebra differential.
\begin{definition}
A series of multi-linear maps $$m_k: \bigotimes^{k}L\longrightarrow
L$$,if it satisfies following equations:
\begin{eqnarray}
\sum_{\Omega}(-1)^{\sigma(1)+\cdots+\sigma(l-1)}\chi(\sigma)m_{i}[
X_{\sigma(1)}\cdots X_{\sigma(l-1)},m_{j}[X_{\sigma(l)}\cdots
X_{\sigma(l+m-1)}]\cdots X_{\sigma(i+j-1)}]=0
\end{eqnarray}
Where $\Omega=(i+j=n+1,\sigma\in S_{n-1},1\leq l\leq i)$ We call
$m_k$ is a $GA_{\infty}$ algebra.
\end{definition}
\bigskip
{\bf Remark.} $GA_{\infty}$ essentially is commutator sh Lie
structure which is claimed as a special case of sh Lie algebra
\cite{LS},\cite{LM}
\bigskip

Obviously,$A_{\infty}$ algebra is a special case of generalized
$GA_{\infty}$ algebra.
\begin{definition}
Given a $GA_{\infty}$ algebra $m_k$ and series of maps $$\rho_n:
\bigotimes^{n}L\longrightarrow EndV$$ if it satisfies equations
\ref{d} we call $\rho_n$ is a $GA_{\infty}$ algebra  representation
with representation space $V$.
\end{definition}

Similarly, we can define  $GA_{\infty}$ differential and
$GA_{\infty}$ cohomology as follows:
\begin{definition}

For any $\omega_n\in C^{n}(L,V)$, we have $k$-component of
$GA_{\infty}$ algebra differential: $$S_k: C^{n}(L,V)\longrightarrow
C^{n+k-1}(L,V)$$ as follows:

$$(S_k\omega_{n})(X_{j_1},\cdots
X_{j_{n+k-1}})=\frac{1}{(n+k-1)!}\sum_{\sigma\in S_{n+k-1}}
\chi(\sigma)\rho(X_{j_{\sigma(1)}}\cdots
X_{j_{\sigma(l-1)}})(\omega(X_{j_{\sigma(k)}}\cdots
X_{j_{\sigma(n+k-1)}})$$
\begin{eqnarray}
+\sum_{\Omega}D(\sigma)\omega_{n}( X_{j_{\sigma(1)}}\cdots
X_{j_{\sigma(l-1)}},m_{k}[X_{j_{\sigma(l)}}\cdots
X_{j_{\sigma(k+l)}}]\cdots X_{j_{\sigma(n+k-1)}} )
\end{eqnarray}
Where $\Omega$ and $D(\sigma)$ are defined as before
\end{definition}

 By analogy of the proof of the theorem ,  $S=\sum_{k=1}^{\infty}S_k$ is also a nilpotent
 operator.
 If we loosen the condition that $\omega_n$ should be skew-symmetry
 and let $GA_{\infty}$ degenerate to $A_{\infty}$, we have
\begin{proposition}
If $m_k$ is a  $A_{\infty}$, then formal series
$S=\sum_{k=1}^{\infty}S_k$ defines a differential on
$C^{*}(L,V)=\bigoplus_{n=1}^{\infty}Hom(L^{\otimes n},V)$ Where

$$(S_k\omega_{n})(X_{j_1},\cdots X_{j_{n+k-1}})=\sum_{l=1}^{n}
\rho(X_{1}\cdots X_{l-1)})(\omega(X_{l}\cdots X_{n+k-1})$$
\begin{eqnarray}
-\sum_{l=1 }^{n}(-1)^{X_1+\cdots+ X_{l-1}}\omega_{n}( X_1\cdots
X_{l-1},m_{k}[X_{l}\cdots X_{k+l}]\cdots X_{n+k-1})
\end{eqnarray}
and $\rho_n$ is a $GA_{\infty}$ representation.
\end{proposition}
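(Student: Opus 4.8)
The plan is to mirror the two-step strategy already used for the $CL_\infty$ differential — first the intertwining identity \ref{e}, then the nilpotency of the associated ghost derivative — but carried out in the free, non-(skew-)symmetric setting appropriate to $A_\infty$. First I would transport the problem to the ghost side. Since we have loosened the skew-symmetry requirement on the cochains $\omega_n$, the ghost variables $\eta^i$ should no longer be taken graded-commutative; instead they generate the free tensor algebra $T(V^*)$, and the assignment $\omega_n\mapsto\widetilde{\omega_n}=\omega_n(X_{i_1},\dots,X_{i_n})\,\eta^{i_1}\cdots\eta^{i_n}$ is a linear isomorphism of $C^{*}(L,V)=\bigoplus_n Hom(L^{\otimes n},V)$ onto $T(V^*)\otimes V$. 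I would then establish the $A_\infty$ analogue of \ref{e}, namely $\widetilde{S_k}\,\widetilde{\omega_n}=\widetilde{S_k\omega_n}$, by repeating the computation of the intertwining theorem verbatim but dropping the symmetrization $\frac{1}{(n+k-1)!}\sum_\sigma\chi(\sigma)$: because distinct orderings of the $\eta$'s are now linearly independent, the two pieces $A$ and $B$ of that computation reproduce exactly the unsymmetrized $S_k$ of the statement (its $\rho$-term and its $m_k$-term). Granting this identity, $S^2=0$ on $C^{*}(L,V)$ is equivalent to $\widetilde{S}^2=0$ on $T(V^*)\otimes V$, since $\omega\mapsto\widetilde{\omega}$ is injective and intertwines $S=\sum_k S_k$ with $\widetilde{S}=\sum_k\widetilde{S_k}$.

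Next I would prove $\widetilde{S}^2=0$ on the ghost algebra. By Lemma 2, $\widetilde{S}$ is an odd derivation, so its square $\widetilde{S}^2$ is again a derivation (of even degree); hence $\widetilde{S}^2$ vanishes identically as soon as it vanishes on the algebra generators, that is, on each $\eta^i$ and on each $f\in V$. The computation of $\widetilde{S}^2\eta^i$ is the one carried out in the proof of Theorem 3 and the following Proposition, but without the anti-symmetrizing operator $\hat{S}$: the coefficient of each ordered monomial $\eta^{j_1}\cdots\eta^{j_{n}}$ is precisely the left-hand side of the $A_\infty$ defining relation for $m_k$ (the displayed equation in the definition of $GA_\infty$ algebra, degenerated to $A_\infty$), which vanishes by hypothesis. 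Likewise the computation of $\widetilde{S}^2 f$ reproduces the split into parts $A$ and $B$ from the representation theorem, again without symmetrization, and the resulting coefficient is exactly the $GA_\infty$ representation equation \ref{d}, which holds because $\rho_n$ is assumed to be a $GA_\infty$ representation. Thus $\widetilde{S}^2$ annihilates all generators and so vanishes, giving $S^2=0$.

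The main obstacle is the bookkeeping in the intertwining step under the removal of (anti-)symmetrization. In the skew case the factor $\frac{1}{(n+k-1)!}\sum_\sigma\chi(\sigma)$ was indispensable for landing inside the graded-commutative ring $R$ and for collecting many ordered terms into a single symmetric one; here I must instead check that the free-algebra computation produces one nilpotency equation per ordered monomial, so that the required vanishing is, term by term, the $A_\infty$ relation itself rather than a symmetrized average of it. Care is also needed with the Koszul signs $(-1)^{X_1+\cdots+X_{l-1}}$ attached to each insertion point of $m_k$: one must verify monomial by monomial that they coincide with the signs produced by the odd Leibniz rule of Lemma 2, and similarly that the sign in front of the $\rho$-term matches. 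Once these sign and combinatorial matchings are confirmed, nilpotency follows at once, and the proposition reduces — as the text anticipates — to the already-established $A_\infty$ structure relations and the representation identities \ref{d}.
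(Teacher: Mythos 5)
Your proposal is correct and takes essentially the approach the paper itself intends: the paper offers no written proof of this proposition, only the remark that it follows ``by analogy'' with Theorem 3, the representation theorem, and the intertwining identity, and your plan --- pass to non-commutative (free tensor algebra) ghosts, establish the unsymmetrized intertwining identity, and check $\tilde{S}^2=0$ on generators via the $A_\infty$ and representation relations --- is a faithful and more careful elaboration of exactly that analogy. The one point you add that the paper leaves implicit, correctly, is that dropping skew-symmetry forces the ghosts to be free rather than graded-commutative, so that each ordered monomial yields its own nilpotency equation rather than a symmetrized average.
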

When $m_n$ is just an associate algebra $A$, there is a natural
$GA_{\infty}$ representation $\rho:A\longrightarrow A$with
representation space $A$ as follows: For any fixed $a\in A$ and any
$b\in A$ define $$\rho(a)b=m(a,b)$$, then the corresponding
differential is nothing but Hochchild differential.

\end{document}